\newcommand{\nc}{\newcommand}
\nc{\dmo}{\DeclareMathOperator}
\dmo{\ra}{\rightarrow}
\dmo{\Z}{\mathbb{Z}}
\dmo{\R}{\mathbb{R}}
\dmo{\T}{\mathcal{T}}
\dmo{\C}{\mathcal{C}}
\dmo{\inte}{int }
\dmo{\N}{\mathcal{N}}
\dmo{\AC}{\mathcal{AC}}
\dmo{\Mod}{Mod}
\dmo{\PMod}{PMod}
\dmo{\B}{B}
\dmo{\bH}{\mathbb{H}}
\dmo{\PB}{PB}
\dmo{\Sp}{Sp}
\dmo{\I}{\mathcal{I}}
\dmo{\el}{\ell_{\C}}
\dmo{\NN}{\mathcal{N}}
\dmo{\Tr}{Tr}
\dmo{\llangle}{\langle\langle}
\dmo{\rrangle}{\rangle\rangle}
\tikzset{->-/.style={decoration={
			markings,
			mark=at position #1 with {\arrow{>}}},postaction={decorate}}}
\nc{\nt}{\newtheorem}
\newtheorem{thm}{{\bf Theorem}}[section]
\newtheorem{lem}[thm]{{\bf Lemma}}
\newtheorem{cor}[thm]{{\bf Corollary}}
\newtheorem{remark}[thm]{Remark}
\newtheorem{question}[thm]{Question}
\newtheorem{definition}[thm]{Definition}
\numberwithin{equation}{section}
\title[Reducible normal generators]{Reducible normal generators for\\mapping class groups are abundant}
\date{\today}
\author{Hyungryul Baik}
\address{
	Department of Mathematical Sciences, KAIST\\
	291 Daehak-ro Yuseong-gu, Daejeon, 34141, South Korea 
}
\email{%
	hrbaik@kaist.ac.kr
}
\author{Dongryul M. Kim}
\address{%
	Department of Mathematics, Yale University\\
	219 Prospect Street, New Haven, CT 06511, USA
}
\email{%
	dongryul.kim@yale.edu
}
\author{Chenxi Wu}
\address{%
	Department of Mathematics, University of Wisconsin--Madison\\
	480 Lincoln Drive, Madison, WI 53706, USA
}
\email{%
	cwu367@math.wisc.edu
}
\begin{document}
\begin{abstract}
	In this article, we study the normal generation of the mapping class group. We first show that a mapping class is a normal generator if its restriction on the invariant subsurface normally generates the (pure) mapping class group of the subsurface. As an application, we provided a criterion for reducible mapping classes to normally generate the mapping class groups in terms of its asymptotic translation lengths on Teichm\"uller spaces. This is an analogue to the work of Lanier-Margalit dealing with pseudo-Anosov normal generators. 
\end{abstract}

\maketitle

%%%%%%%%%%%%%%%%%%%%%%%%%%%%%%%%%%%%%%%%%%%%%%%%%%%%%%%%%%%%%%%%%%%%%%
%
%	Introduction
%
%%%%%%%%%%%%%%%%%%%%%%%%%%%%%%%%%%%%%%%%%%%%%%%%%%%%%%%%%%%%%%%%%%%%%%

\section{Introduction}	\label{section:introduction}

Let $S = S_g$ be a closed connected orientable surface with genus $g > 1$ and let $\Mod(S)$ be its mapping class group. From \cite{lickorish1964finite} and \cite{mumford1967abelian}, it is well-known that $\Mod(S)$ can be normally generated by a Dehn twist along a non-separating simple closed curve (simply, curve). It is natural to ask what other mapping classes normally generate the mapping class group and how abundant they are to study the algebraic structure of $\Mod(S)$.

There are several ways to interpret the abundancy of normal generators. For instance, we can consider a random walk on $\Mod(S)$, and then see whether the random walk produces normal generators with high probability or not. In this sense, Maher and Tiozzo showed in \cite{maher1807random} that a random mapping class is not a normal generator by showing that the normal closure of a random element is a free group (see also \cite{dahmani2017hyperbolically} and \cite{clay2021right} for normal generators for proper subgroups of $\Mod(S)$). 

Since a random mapping class is pseudo-Anosov (see \cite{kowalski2008sieve}, \cite{rivin2008walks}, \cite{maher1807random}, \cite{baik2022topological}, \cite{baik2021linear} for instance), one might interpret the result of Maher and Tiozzo as most pseudo-Anosov mapping classes are not normal generators. However, there are abundant examples of pseudo-Anosov normal generators, and one general construction is given by Lanier and Margalit \cite{lanier2022normal}. Indeed, for $f \in \Mod(S)$, Lanier and Margalit studied translation length of $f$ on the Teichm\"uller space $\T(S)$, which can be considered in two different ways; the first one is more standard one, i.e., $\tau_{\T}(f) := \inf\limits_{x \in \T(S)} d_{\T}(x, f(x))$. We will call this quantity \emph{minimal translation length}. The second one is so-called \emph{asymptotic translation length} (a.k.a. \emph{stable translation length}) which is defined by $\ell_{\T}(f) := \lim_{n\to\infty} \dfrac{d_{\T}(x, f^n(x))}{n}$ for any $x \in \T(S)$. Then Lanier and Margalit showed that a pseudo-Anosov element $f$ with small minimal translation length $\tau_{\T}(f)$ normally generates the mapping class group where ``small" means less than $\frac{1}{2} \log 2$.

In this perspective, it is natural to ask the same question about non-pseudo-Anosov mapping classes. Here is our main question: 
\begin{question} \label{que:normalgeneration} 
	If a reducible element $f$ of $\Mod(S)$ has small translation length on $\T(S)$, does it normally generate $\Mod(S)$ except for obvious counterexamples? 
\end{question}

The question is of course open-ended since it does not specify what we mean by ``small" and ``obvious counterexample". For the latter, we note that any Dehn twist has zero asymptotic translation length on $\T(S)$. On the other hand, the Dehn twist along a separating curve (in short, we call it a separating twist) cannot normally generate $\Mod(S)$ while the Dehn twist along a non-separating curve (in short, a non-separating twist) does normally generate $\Mod(S)$. Separating twists are obvious counterexamples. One can define ``small" in the question as in the theorem of Lanier-Margalit. Hence, a formalized version of Question \ref{que:normalgeneration} is as follows:

\begin{question} \label{que:normalgenerationmodified} 
	For a reducible element $f$ of $\Mod(S)$ with $0 < \ell_{\T}(f) \le \frac{1}{2} \log 2$, does it normally generate $\Mod(S)$? 

\end{question}

In this paper, we give a partial answer to Question \ref{que:normalgenerationmodified}. We call a mapping class $f \in \Mod(S)$ \emph{partly pseudo-Anosov} if there is an embedded subsurface $A \subseteq S$ which is invariant under $f$ and $f|_A$ is pseudo-Anosov. Then we prove:

\begin{restatable*}{thm}{smalltrans} \label{thm:maintheorem}
	Let $S$ be a closed connected orientable surface and $f \in \Mod(S)$ be partly pseudo-Anosov with invariant subsurface $A$ on which $f|_A$ is pseudo-Anosov. If $\ell_{\T}(f) \le \frac{1}{2} \log 2$ and $A$ has genus at least three, then $f$ is a normal generator of $\Mod(S)$.
\end{restatable*}

This generalizes Lanier-Margalit's result in the sense that the philosophy ``small translation length implies normal generation" works for a strictly larger collection of mapping classes than pseudo-Anosov. Indeed, every reducible mapping class with positive asymptotic translation length on the Teichm\"uller space has a power which is partly pseudo-Anosov. This observation also deduces the partial answer for Question \ref{que:normalgeneration} from Theorem \ref{thm:maintheorem}.

\begin{restatable*}{cor}{transred} \label{cor:transred}
	For $f \in \Mod(S_g)$ with $g > 2$, $0 < \ell_{\T}(f) \le \frac{\log 2}{2(2g-2)}$ implies that the normal closure of $f$ contains the commutator subgroup of $\Mod(S_g)$.
	
	In particular, let $f \in \Mod(S_g)$ be a mapping class with a maximal invariant multicurve $\gamma$ such that each component of $S_g \setminus \gamma$ has genus at least three. If $0 < \ell_{\T}(f) \le \frac{3\log 2}{2(g-1)}$, then $f$ normally generates $\Mod(S_g)$.
\end{restatable*}

In \cite{tsai2009asymptotic}, Tsai figured out the asymptotic behavior of $\ell_{g,n} := \inf \{ \ell_{\T}(f) : f \in \Mod(S_{g, n}) \mbox{ is pseudo-Anosov}\}$ where $S_{g, n}$ is a surface of genus $g$ with $n$ punctures. Namely, for any fixed $g \geq 2$, Tsai showed that there exists a constant $c_g$ such that $$ \dfrac{\log n}{c_g n} \leq \ell_{g, n} \leq \dfrac{c_g \log n}{n}. $$ 
At the first glance, one might think this result is an obstruction for reducible elements satisfying the assumptions in Corollary \ref{cor:transred} to exist. However, the dependence of the constant $c_g$ on $g$ is not explicit and a priori it is not clear at all if such reducible elements indeed exist. Hence, we leave this as the following question.

\begin{question}
	Is there a reducible $f \in \Mod(S_g)$ with $0 < \ell_{\T}(f) \le \frac{\log 2}{2(2g-2)}$?
\end{question}

Finally we note that in Theorem \ref{thm:maintheorem}, we interpreted the translation length in Question \ref{que:normalgeneration} as the asymptotic translation length. Since in general the minimal translation length is bigger than or equal to the asymptotic translation length by definition, now the result holds with the minimal translation length as well.  In the case of pseudo-Anosov mapping classes, the distinction between the asymptotic translation length and minimal translation length disappears, since any pseudo-Anosov element $f$ has a unique invariant axis in $\T(S)$ on which $f$ acts by a translation.

Theorem \ref{thm:maintheorem} follows from the locality of the normal generation together with Minsky's product region theorem for Teichm\"uller space \cite{minsky1996extremal}. More precisely, we also prove the following.

\begin{restatable*}{thm}{locality} \label{thm.main1}
	Let $S$ be a closed connected orientable surface and let $A \subseteq S$ be an embedded subsurface with genus at least three. If $f \in \Mod(S)$ leaves $A$ invariant and the normal closure of $f|_A$ contains the commutator subgroup of $\PMod(A)$, then $f$ is also a normal generator of $\Mod(S)$.

	Moreover, if the normal closure of $f|_A$ contains $\PMod(A)$, then the normal closure of $f$ contains the commutator subgroup of $\Mod(S)$ even when $A$ has genus one or two. In particular, when $S$ has genus at least three, then $f$ is a normal generator of $\Mod(S)$.
\end{restatable*}

Based on this locality property, we could provide a systematic way to produce reducible normal generators. Indeed, in \cite{baik2023asymptotic}, Baik, Kin, Shin, and Wu observed that normal generators are generic in a fibered cone of a hyperbolic 3-manifold fibering over the circle. Here, the genericity is considered in the sense that there is a translation of the fibered cone in which every primitive element has monodromy that normally generates the mapping class group. As an application of Theorem \ref{thm.main1}, we proved the genericity for more general class of fibered 3-manifolds in Section \ref{sec:generic}.

In \cite{baik2023asymptotic}, it is conjectured \cite[Conjecture 1.3]{baik2023asymptotic} that all but finitely many primitive elements in the fibered cone of a hyperbolic fibered 3-manifold give normal generators of mapping class groups, which originates from Margalit's question \cite{margp}. The authors answered affirmatively when the fibered cone is of dimension two. Our results in Section \ref{sec:generic} also give a partial answer to a non-hyperbolic version of this conjecture, while essential tools for hyperbolic version may not be directly applied to this variant.

\begin{remark}
	Theorems so far can be easily seen when we consider a mapping class which is identity outside of the invariant subsurface and restricted to a normal generator on the subsurface. This is because conjugation of the map on the subsurface can be extended to a conjugation that is the identity outside the subsurface, together with the fact that a non-separating twist is a normal generator.

	However, it is nontrivial when we consider general reducible mapping classes, for instance, partly pseudo-Anosov maps.
\end{remark}

\subsection*{Acknowledgements}

The authors greatly appreciate Ian Biringer, Ethan Farber, Changsub Kim, Yair N. Minsky, and Dragomir Saric for helpful discussions. We would like to thank anonymous referees for helpful comments.
The first author was partially supported by the National Research Foundation of Korea(NRF) grant funded by the Korea government(MSIT) (No. 2020R1C1C1A01006912).

%%%%%%%%%%%%%%%%%%%%%%%%%%%%%%%%%%%%%%%%%%%%%%%%%%%%%%%%%%%%%%%%%%%%%%
%
%	Normal generation of pA
%
%%%%%%%%%%%%%%%%%%%%%%%%%%%%%%%%%%%%%%%%%%%%%%%%%%%%%%%%%%%%%%%%%%%%%%

\section{Previously known results on Normal generation of $\Mod(S)$} \label{sec:normal}

In this section, we recall a result of Lanier-Margalit. They showed that a pseudo-Anosov mapping class with small asymptotic translation length on Teichm\"uller space normally generates the mapping class group. More precisely:

\begin{thm}[{\cite[Theorem 1.2]{lanier2022normal}}] \label{thm.LMtrans}
	Let $S$ be a connected orientable surface of genus $g\ge 0$ and with finitely many punctures. If a pseudo-Anosov $f \in \Mod(S)$ satisfies $\ell_{\T}(f) \le \frac{1}{2} \log 2$, then the normal closure of $f$ contains the commutator subgroup of $\Mod(S)$. In particular, if $g \ge 3$, then $\ell_{\T}(f) \le \frac{1}{2} \log 2$ implies that $f$ normally generate $\Mod(S)$ ($\PMod(S)$ if the surface is punctured).
\end{thm}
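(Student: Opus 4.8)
The plan is to establish the first assertion---that $\langle\langle f\rangle\rangle$ contains $[\Mod(S),\Mod(S)]$---and to deduce the ``in particular'' clause from it. The deduction is purely formal: for $g \ge 3$ the group $\Mod(S)$ (respectively $\PMod(S)$ when $S$ has punctures) is perfect, so $[\Mod(S),\Mod(S)] = \Mod(S)$, and hence the containment $\langle\langle f\rangle\rangle \supseteq [\Mod(S),\Mod(S)]$ upgrades to $\langle\langle f\rangle\rangle = \Mod(S)$. Thus all the content lies in producing elements of the commutator subgroup inside the normal closure of $f$.

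For the geometric input I would use that a pseudo-Anosov $f$ translates along a Teichm\"uller geodesic axis, so $\ell_{\T}(f) = \tau_{\T}(f)$ is realized at some point $X$ on the axis with $d_{\T}(X, f(X)) = \ell_{\T}(f) \le \tfrac12\log 2$. Kerckhoff's formula $d_{\T}(X,Y) = \tfrac12 \log \sup_{\alpha} \mathrm{Ext}_Y(\alpha)/\mathrm{Ext}_X(\alpha)$ bounds the distortion of extremal length between $X$ and $f(X)$ by the factor $e^{2 d_{\T}(X,f(X))} \le 2$. Taking $c$ to be a curve of small extremal length on $X$ (e.g.\ an extremal-length systole) and using the conformal invariance $\mathrm{Ext}_{f(X)}(f(c)) = \mathrm{Ext}_X(c)$ gives $\mathrm{Ext}_X(f(c)) \le 2\,\mathrm{Ext}_X(c)$; feeding this into the inequality $i(\alpha,\beta)^2 \le \mathrm{Ext}_X(\alpha)\,\mathrm{Ext}_X(\beta)$ then controls $i(c,f(c))$. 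The value $\tfrac12\log 2$ is exactly what is needed to force $i(c, f(c)) \le 1$, so that $c$ and $f(c)$ are distinct curves meeting at most once. Verifying that such a curve $c$ genuinely exists with the sharp constant is the crux of the geometric half.

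The algebraic extraction rests on the elementary remark that for every curve $c$ the commutator $[f, T_c] = f T_c f^{-1} T_c^{-1} = T_{f(c)} T_c^{-1}$ lies in $\langle\langle f\rangle\rangle$, since $T_c f^{-1} T_c^{-1}$ is a conjugate of $f^{-1}$; note that this element is genuinely a commutator, which is precisely why the general-genus conclusion is phrased in terms of the commutator subgroup rather than the whole group. When $c$ and $f(c)$ sit in the controlled position above, $T_{f(c)}T_c^{-1}$ is a recognizable mapping class---a product of oppositely oriented twists along disjoint curves (a bounding-pair map when they cobound a subsurface), or a mapping class supported on and filling a one-holed torus when they meet once---and one argues via change-of-coordinates together with the chain and lantern relations that the normal closure of such an element already exhausts $[\Mod(S),\Mod(S)]$. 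This last step, the ``well-suited curve'' criterion, is where I expect the main difficulty: one must verify that the finitely many configurations coming out of the geometry are all of the favorable type, and carry the sharp constant through so that the geometric threshold ($i(c,f(c)) \le 1$) and the algebraic threshold (a configuration from which a commutator-subgroup generator can be built) match exactly.
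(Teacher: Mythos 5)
Your proposal does not match what the paper actually does: the paper never reproves this theorem. The statement is a citation of Lanier--Margalit, and the only argument the paper supplies is the remark that the two ingredients of their proof---the well-suited curve criterion and the combinatorics of systoles of the \emph{singular Euclidean metric} of the invariant quadratic differential (citing \cite[Lemma 2.5, Proposition 2.7]{farb2008lower})---go through verbatim for surfaces with finitely many punctures. Measured against that underlying proof, your algebraic half is the right idea (the observation $[f,T_c]=T_{f(c)}T_c^{-1}\in\llangle f\rrangle$ plus a well-suited curve criterion), but your geometric half takes a different route and has a genuine gap. From Kerckhoff's formula and $i(\alpha,\beta)^2\le \mathrm{Ext}_X(\alpha)\,\mathrm{Ext}_X(\beta)$ you get $i(c,f(c))\le \sqrt{2}\,\mathrm{Ext}_X(c)$, so you need a curve with $\mathrm{Ext}_X(c)<\sqrt{2}$ at a point $X$ on the axis. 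No such universal bound exists: the extremal-length systole of a closed Riemann surface is unbounded over moduli space (surfaces with large hyperbolic systole have every curve of large extremal length, by Maskit's comparison of hyperbolic and extremal lengths), and nothing in your sketch uses the pseudo-Anosov hypothesis to control the geometry of points on the axis. You correctly flag this as ``the crux,'' but it is precisely the step where the known proof does something different: it works with the unit-area flat metric $|q|$ of the invariant quadratic differential, whose systole \emph{is} universally short by an area argument, and bounds $i(c,f(c))$ through the way the Teichm\"uller map scales horizontal and vertical flat lengths by $\lambda^{\pm 1}$; this is exactly where $\lambda\le\sqrt{2}$, i.e.\ $\ell_{\T}(f)\le\frac{1}{2}\log 2$, enters. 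The extremal-length shortcut does not recover this.

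There is also an outright error in your algebraic half: you list the bounding-pair configuration as favorable and assert that the normal closure of such an element ``already exhausts'' the commutator subgroup. A bounding pair map $T_{f(c)}T_c^{-1}$ (with $c$ and $f(c)$ disjoint and cobounding) acts trivially on $H_1(S;\Z)$, so its normal closure is contained in the Torelli group; since $\Mod(S)/\I(S)\cong \Sp(2g,\Z)$ is non-abelian, the Torelli group does not contain $[\Mod(S),\Mod(S)]$, and this configuration can never yield the conclusion by itself. Ruling out or otherwise handling the case where the systole and its image cobound is genuine work in Lanier--Margalit's argument, not a routine change-of-coordinates plus lantern/chain computation. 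Finally, note that the statement you are proving explicitly includes punctured surfaces and $\PMod$; the paper's entire contribution at this point is checking that the ingredients extend to that setting, which your sketch never addresses.
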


We note here that while Lanier and Margalit considered closed surfaces, it holds for surfaces with finitely many punctures. Indeed, the proof of Theorem \ref{thm.LMtrans} is based on the so-called well-suited curve criteria and the combinatorics of a systole with respect to the singular Euclidean metric on the surface induced by a pseudo-Anosov. One can observe that both work for surfaces with finitely many punctures. Indeed, \cite[Section 3]{lanier2022normal} remarks the well-suited curve criteria for such surfaces, and the exact same proofs of \cite[Lemma 2.5, Proposition 2.7]{farb2008lower} work for surfaces with finitely many punctures.

Lanier and Margalit also provided a general version of the well suited curve criteria, which involves certain graphs encoding the combinatorics of curves on a surface and dynamics of a mapping class.

\begin{definition}
	Let $S$ be a surface and $f \in \Mod(S)$. $\N_f(S)$ is a graph whose vertices are isotopy classes of non-separating curves on $S$, and two vertices $\alpha$, $\beta$ are connected by an edge if there is a conjugation $hfh^{-1}$, $h \in \Mod(S)$, such that $\beta = hfh^{-1}(\alpha)$.
\end{definition}

Note that $\N_f(S)$ is not a subgraph of a curve complex $\C(S)$ in general. Lanier and Margalit showed the following necessary and sufficient condition for a mapping class to be a normal generator.

\begin{thm}[{\cite[Proposition 7.3]{lanier2022normal}}] \label{thm.LM}
	Let $S$ be a closed connected orientable surface. Then the normal closure of $f \in \Mod(S)$ contains the commutator subgroup if and only if $\mathcal{N}_f(S)$ is connected. In particular, if $S$ has genus at least three, $\N_f(S)$ is connected if and only if $f$ normally generates $\Mod(S)$.
\end{thm}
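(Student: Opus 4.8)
The plan is to translate the connectivity of $\N_f(S)$ into a transitivity statement about the normal closure $N := \langle\langle f \rangle\rangle$, and then to read off containment of the commutator subgroup from the structure of the quotient $G/N$, where $G := \Mod(S)$. Note first that every element of $G$ preserves the set of non-separating curves, so it makes sense to speak of the $N$-orbits on this set.

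First I would identify the connected components of $\N_f(S)$ with the orbits of $N$ acting on non-separating curves. Since $N \trianglelefteq G$ contains $f$, every conjugate $hfh^{-1}$ lies in $N$, so an edge of $\N_f(S)$ always joins two curves in a common $N$-orbit; hence each component is contained in a single orbit. Conversely, writing an arbitrary element of $N$ as a word in conjugates $c_i = h_i f h_i^{-1}$ and their inverses, each letter $c_i^{\pm 1}$ carries a curve to an adjacent one in $\N_f(S)$ (in the inverse case, the relation $\gamma_{i-1} = c_i(\gamma_i)$ exhibits the undirected edge $\{\gamma_{i-1},\gamma_i\}$). Thus any two curves in the same orbit are joined by a path, and $\N_f(S)$ is connected if and only if $N$ acts transitively on non-separating curves.

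The heart of the argument is the implication: transitivity $\Rightarrow$ $N \supseteq [G,G]$. Recall that $G$ is generated by Dehn twists along non-separating curves (Lickorish--Humphries), and that by the change-of-coordinates principle any two such twists are conjugate in $G$; consequently $H_1(\Mod(S)) = G/[G,G]$ is cyclic, generated by the class of a single non-separating twist. Now suppose $N$ is transitive, and take non-separating curves $\alpha,\beta$ together with $n \in N$ with $n(\alpha)=\beta$. Then $T_\beta = T_{n(\alpha)} = n T_\alpha n^{-1}$, so in $G/N$ the images of $T_\alpha$ and $T_\beta$ agree (since $n \equiv 1$). Hence all non-separating twists share one common image $t$ in $G/N$, and as they generate $G$ the quotient $G/N = \langle t \rangle$ is cyclic, in particular abelian; therefore $N \supseteq [G,G]$.

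For the converse I would verify that $[G,G]$ already acts transitively on non-separating curves, whence $N \supseteq [G,G]$ forces transitivity of $N$. Given $\alpha,\beta$, choose $g \in G$ with $g(\alpha)=\beta$; because the abelianization is cyclic generated by the class of $T_\beta$, some power $T_\beta^{k} g$ lies in $[G,G]$, and it still sends $\alpha$ to $\beta$ as $T_\beta$ fixes $\beta$. Finally, the ``in particular'' clause follows because $\Mod(S)$ is perfect for $g \ge 3$, so containing the commutator subgroup is equivalent to being all of $\Mod(S)$, i.e.\ to $f$ normally generating. The main content is the transitivity-implies-abelian direction, and I expect the one point deserving care to be the component--orbit identification with inverse letters; everything else reduces to the conjugation formula $g T_\alpha g^{-1} = T_{g(\alpha)}$ and the cyclicity of $H_1(\Mod(S))$.
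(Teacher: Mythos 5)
Your proposal is correct, but note that the paper itself gives no proof of this statement: it is imported verbatim as \cite[Proposition 7.3]{lanier2022normal}, so there is no in-paper argument to compare against. What you have written is essentially a reconstruction of Lanier--Margalit's original proof: identify the components of $\N_f(S)$ with the orbits of the normal closure $N = \llangle f \rrangle$ on non-separating curves (your handling of inverse letters via the undirected edge relation is the right way to make this precise), then use that non-separating twists generate $\Mod(S)$, are all conjugate via $gT_\alpha g^{-1} = T_{g(\alpha)}$, and hence give a cyclic abelianization, so that transitivity of $N$ forces $\Mod(S)/N$ to be cyclic and thus $N \supseteq [\Mod(S),\Mod(S)]$; the converse via transitivity of the commutator subgroup (correcting an arbitrary change-of-coordinates map by a power of $T_\beta$) is also the standard argument. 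The one degenerate case your argument silently skips is genus $0$, where there are no non-separating curves at all, but there $\Mod(S)$ is trivial and the statement is vacuous; for genus $1$ and $2$ your appeal to cyclicity of $H_1(\Mod(S))$ (rather than perfectness) is exactly what makes the equivalence with the commutator subgroup, rather than the full group, the right statement.
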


%%%%%%%%%%%%%%%%%%%%%%%%%%%%%%%%%%%%%%%%%%%%%%%%%%%%%%%%%%%%%%%%%%%%%%
%
%	Normal generation of reducible
%
%%%%%%%%%%%%%%%%%%%%%%%%%%%%%%%%%%%%%%%%%%%%%%%%%%%%%%%%%%%%%%%%%%%%%%

\section{Locality of normal generation}

In this section, we prove the normal generation of the mapping class group is a local property:

\locality

By Masur and Misnky \cite{masur1999geometry}, any two vertices in the curve complex are connected by a path. However, $\mathcal{N}_f(S)$ has different features from the curve complex. First, all vertices of $\mathcal{N}_f(S)$ correspond to non-separating curves. Furthermore, an edge of $\mathcal{N}_f(S)$ may not be an edge of the curve complex, and vice versa. As such, our strategy is modifying a path in the curve complex into a path in $\mathcal{N}_f(S)$ via appropriate surgery. We start with the following well-known fact \cite[Theorem 4.4]{FarbMargalit12}. We record the proof for the sake of completeness.

\begin{lem} \label{lem.nonsep}
	Let $\alpha, \beta \in \C(S)$ be vertices corresponding to non-separating curves. Then there is a path $\eta$ in $\C(S)$ connecting $\alpha$ and $\beta$ consisting of vertices corresponding to non-separating curves.
\end{lem}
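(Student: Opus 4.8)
The plan is to prove the statement directly by induction on the geometric intersection number $i(\alpha,\beta)$, upgrading the classical Lickorish-style proof of connectivity of $\C(S)$ so that every curve produced along the way is non-separating. (Alternatively one could start from the path supplied by Masur--Minsky and repair it, but building the non-separating path from scratch is cleaner.) We may assume $\alpha\ne\beta$. If $i(\alpha,\beta)=0$ then $\alpha$ and $\beta$ are disjoint and the single edge $\alpha,\beta$ already does the job, both endpoints being non-separating by hypothesis. So assume $i(\alpha,\beta)=k>0$ and realize $\alpha,\beta$ in minimal position.

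When $k\ge 2$, I would perform a surgery of $\beta$ along $\alpha$. Choose two points $p,q\in\alpha\cap\beta$ that are consecutive along $\beta$, and let $\beta_0\subset\beta$ be the subarc between them, so that $\mathrm{int}(\beta_0)$ is disjoint from $\alpha$. Writing $\alpha=\alpha_1\cup\alpha_2$ for the two subarcs of $\alpha$ from $p$ to $q$, set $\gamma_1=\alpha_1\cup\beta_0$ and $\gamma_2=\alpha_2\cup\beta_0$; each is an embedded circle meeting $\alpha$ only along an arc of $\alpha$, so after pushing that arc slightly off $\alpha$ each $\gamma_j$ becomes disjoint from $\alpha$. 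Since the push-off of $\beta_0$ runs parallel to $\beta$ and the remaining $k-2$ points of $\alpha\cap\beta$ split as $m_1+m_2=k-2$ between $\mathrm{int}(\alpha_1)$ and $\mathrm{int}(\alpha_2)$, one gets $i(\gamma_j,\beta)\le m_j\le k-2<k$ for both $j$; thus each surgered curve is disjoint from $\alpha$ and strictly reduces the intersection with $\beta$.

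The key point is to guarantee that the surgered curve I keep is non-separating. Here I would use homology: orienting the arcs suitably, the two surgered curves satisfy $[\gamma_1]-[\gamma_2]=\pm[\alpha]$ in $H_1(S;\Z)$. Because $\alpha$ is non-separating on the closed surface $S$, its class $[\alpha]$ is nonzero, so $[\gamma_1]\ne[\gamma_2]$ and at least one $\gamma_j$ has nonzero homology class; such a curve is automatically essential and non-separating. Choosing $\gamma=\gamma_j$ to be this non-separating curve, it is disjoint from $\alpha$ and satisfies $i(\gamma,\beta)<k$, so the inductive hypothesis yields a non-separating path from $\gamma$ to $\beta$, and prepending $\alpha$ (which is disjoint from $\gamma$) completes the path.

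The remaining case, and the one requiring separate care, is $k=1$, where there is only one intersection point and the surgery above is unavailable. Here $\alpha\cup\beta$ fills a one-holed torus, and since $S$ has genus at least two its complement contains a simple closed curve $\gamma$ that is non-separating in $S$ and disjoint from both $\alpha$ and $\beta$; then $\alpha,\gamma,\beta$ is the desired path. I expect the main obstacle to be exactly this interplay between the two requirements --- reducing $i(\cdot,\beta)$ while staying non-separating --- which the homology identity $[\gamma_1]-[\gamma_2]=\pm[\alpha]$ resolves; verifying the orientation bookkeeping behind that identity and the elementary intersection-count bound are the only genuinely technical points.
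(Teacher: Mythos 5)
Your strategy --- building the path from scratch by induction on $i(\alpha,\beta)$, with a homology argument to stay non-separating --- is genuinely different from the paper's proof, which instead takes an arbitrary path in $\C(S)$ and repairs it locally: if an interior vertex $y$ is separating, its two neighbors $x,z$ are disjoint from $y$, so either they lie in the same component of $S\setminus y$ (replace $y$ by a non-separating curve in the other component) or in different components (then $x$ and $z$ are themselves disjoint, so delete $y$). However, your proof as written has a genuine gap in the surgery step, namely in the case where the two intersection points $p,q$ (consecutive along $\beta$) have the same sign.

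The claim that each $\gamma_j=\alpha_j\cup\beta_0$ ``becomes disjoint from $\alpha$'' after pushing $\alpha_j$ off $\alpha$ is true only when $\beta_0$ approaches $\alpha$ from the same side at both of its endpoints, i.e.\ when the crossings at $p$ and $q$ have opposite signs. If they have the same sign, then $\beta_0$ leaves $p$ on one side of $\alpha$ and returns to $q$ on the other side, so the pushed-off copy of $\alpha_j$ must begin on one side of $\alpha$ and end on the other: every representative of $\gamma_j$ crosses $\alpha$ an odd number of times, and in fact $i(\gamma_j,\alpha)=1$ since the algebraic intersection number is $\pm 1$. Then $\alpha$ and $\gamma_j$ do not span an edge of $\C(S)$, so ``prepending $\alpha$'' fails; and you cannot dodge this by choosing a different consecutive pair, since all $k$ intersections may have the same sign (e.g.\ whenever the algebraic and geometric intersection numbers of $\alpha$ and $\beta$ agree). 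This sign dichotomy is exactly why the classical surgery proof of connectivity of $\C(S)$ splits into two cases. The good news is that the gap is repairable inside your framework: in the same-sign case $i(\gamma_j,\alpha)=1$ automatically forces $\gamma_j$ to be non-separating (a separating curve has even geometric intersection number with every curve), and you still have $i(\gamma_j,\beta)\le k-2$; so running the argument as a strong induction on $k$, with statement ``any two non-separating curves intersecting fewer than $k$ times are joined by a non-separating path,'' you can connect $\alpha$ to $\gamma_j$ (intersection $1<k$) and $\gamma_j$ to $\beta$ (intersection $\le k-2<k$) and concatenate. Your homology identity $[\gamma_1]-[\gamma_2]=\pm[\alpha]$ is correct, and it is needed precisely in the opposite-sign case, where a surgered curve really can be separating; the $k=0$ and $k=1$ cases are fine as written.
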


\begin{proof}
	Let $\xi$ be a path in $\C(S)$ that connects $\alpha$ and $\beta$. Such $\xi$ exists due to the connectedness of $\C(S)$. Let $x$, $y$, $z$ be consecutive three vertices in $\xi$, and we denote the subsegment of $\xi$ connecting $x$, $y$, and $z$ by $[x, y, z]$.

	Suppose that $y$ is separating. Then there are two possibilities;\begin{enumerate}
		\item $x$ and $z$ are contained in the same component of $S \setminus y$; In this case, let $y'$ be a non-separating curve in the component of $S \setminus y$ that does not contain $x$ and $z$. Then replace $[x, y, z]$ with $[x, y', z]$.
		\item $x$ and $z$ are contained in different components of $S \setminus y$; In this case, $x$ and $z$ are disjoint so we replace $[x, y, z]$ with $[x, z]$.
	\end{enumerate}
	
	Since $\alpha$ and $\beta$ are non-separating, we can conduct this kind of surgery at each vertex of $\xi$ whenever it corresponds to a separating curve. Then as a consequence, we get a new path $\eta$ in $\C(S)$ connecting $\alpha$ and $\beta$, whose vertices corresponding to non-separating curves.
\end{proof}

We can modify further so that none of the consecutive vertices in the path is a bounding pair.

\begin{lem} \label{lem.nonbound}
	Let $\alpha, \beta \in \C(S)$ be vertices corresponding to non-separating curves. Then there is a path $\gamma$ in $\C(S)$ connecting $\alpha$ and $\beta$ consisting of vertices corresponding to non-separating curves, and none of two consecutive vertices in $\gamma$ forms a bounding pair of $S$.
\end{lem}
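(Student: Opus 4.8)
The plan is to start from the path $\eta$ produced by Lemma \ref{lem.nonsep}, all of whose vertices are non-separating, and to destroy the ``bad'' edges one at a time by a purely local insertion. Concretely, suppose $x, y$ are consecutive vertices of $\eta$ that form a bounding pair. I would like to find a non-separating curve $w$, disjoint from and distinct from both $x$ and $y$, such that neither $\{x, w\}$ nor $\{w, y\}$ is a bounding pair; replacing the edge $[x,y]$ by the length-two segment $[x, w, y]$ then kills this bad edge without introducing a new one. I would perform this surgery simultaneously at every bounding-pair edge of $\eta$. Since the original non-separating vertices are left untouched and each inserted vertex is non-separating, the resulting path $\gamma$ still consists entirely of non-separating curves, and the only new consecutive pairs created are $\{x,w\}$ and $\{w,y\}$, each arranged not to be a bounding pair. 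Thus $\gamma$ has the desired property.

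It remains to produce the curve $w$. Cutting $S$ along $x \cup y$ yields exactly two complementary pieces $S_1$ and $S_2$, each carrying one copy of $x$ and one copy of $y$ as its boundary, and an Euler-characteristic count gives $\operatorname{genus}(S_1) + \operatorname{genus}(S_2) = g - 1$. Since $g > 1$, at least one piece, say $S_1$, has positive genus, so its interior contains two simple closed curves $w, w'$ with geometric intersection number $i(w, w') = 1$. I take this $w$ as the inserted curve; by construction it is disjoint from $\partial S_1 = x \cup y$. The verification that $w$ works rests on the standard fact that a simple closed curve meeting a separating multicurve transversely must meet it in an even number of points. As $w'$ lies in the interior of $S_1$, it is disjoint from the boundary curves $x$ and $y$ and meets $w$ exactly once; hence $w'$ meets each of $w$, $x \cup w$, and $y \cup w$ an odd number of times, so all three of these submanifolds are non-separating in $S$. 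In particular $w$ is a legitimate non-separating vertex, neither $\{x, w\}$ nor $\{w, y\}$ is a bounding pair, and $w$ is isotopic to neither $x$ nor $y$ (two parallel copies of a curve cobound an annulus and would be separating).

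The main thing to get right is precisely this simultaneous constraint on $w$: it must be non-separating \emph{and} fail to cobound with either neighbor, and both conditions are certified in one stroke by the single auxiliary curve $w'$ together with the intersection-parity principle, so that no separate homological bookkeeping is needed. The only genuine input beyond this is the guarantee that a positive-genus complementary piece exists, which is exactly where the hypothesis $g > 1$ enters; once that is in hand the insertion is available at every bad edge and the argument closes.
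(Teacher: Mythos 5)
Your proof is correct and takes essentially the same route as the paper: starting from the path supplied by Lemma \ref{lem.nonsep}, you surger away each bounding-pair edge $[x,y]$ by inserting a non-separating curve disjoint from both $x$ and $y$ that cobounds with neither. The only difference is one of detail --- the paper simply asserts that such a curve $z$ exists in $S \setminus (x \cup y)$, while you verify it explicitly via the genus count $g_1 + g_2 = g-1$ on the complementary pieces and the intersection-parity argument with the dual curve $w'$, which is a sound (and welcome) filling-in of the same step.
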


\begin{proof}
	By Lemma \ref{lem.nonsep}, we have a path $\eta$ in $\C(S)$ connecting $\alpha$ and $\beta$, and consisting solely of non-separating curves. Let $x$ and $y$ be consecutive vertices in $\eta$, and suppose that they form a bounding pair. Then there is a non-separating curve $z$ in $S \setminus (x \cup y)$ so that neither $x$ and $z$ nor $y$ and $z$ forms a bounding pair. Then we can replace $[x, y]$ with $[x, z, y]$ to obtain a new path in $\C(S)$. Continuing this surgery whenever two consecutive vertices form a bounding pair, we finally obtain a path $\gamma$ in $\C(S)$ that none of two consecutive vertices form a bounding pair. It is straightforward that $\gamma$ consists of non-separating curves and endpoints of $\gamma$ are $\alpha$ and $\beta$.
\end{proof}

\subsection*{Proof of Theorem \ref{thm.main1}}
First we consider the case when $A$ has genus at least three. Let $\alpha$, $\beta$ be vertices of $\mathcal{N}_f(S)$. By Lemma \ref{lem.nonsep} and Lemma \ref{lem.nonbound}, there is a path $\gamma$ in $\C(S)$ connecting $\alpha$ and $\beta$, with the property that \begin{itemize}
	\item every vertex in $\gamma$ is non-separating and;
	\item each two consecutive vertices in $\gamma$ do not form a bounding pair.
\end{itemize} Hence, in order to show that $\alpha$ and $\beta$ is connected by a path in $\mathcal{N}_f(S)$, it suffices to consider the case when $\alpha$ and $\beta$ are disjoint curves that do not form a bounding pair.

Denote the genus of $S$ by $g$. Since $S \setminus (\alpha \cup \beta)$ is of genus $g-2$ with four boundary components, there exists $\varphi \in \Mod(S)$ that $\varphi(\alpha)$ and $\varphi(\beta)$ are in $A$. Since there exists a pure mapping class of $A$ that maps $\varphi(\alpha)$ to $\varphi(\beta)$ and $\PMod(A)$ is perfect \cite[Theorem 5.2]{FarbMargalit12}, the hypothesis that the commutator subgroup of $\PMod(A)$ is contained in the normal closure of $f|_{A}$ implies that there exists $f_1, \ldots, f_n \in \Mod(A)$ satisfying $$\varphi(\beta) = (f_1^{-1} f|_A f_1) \circ \cdots \circ (f_n^{-1} f|_A f_n)(\varphi(\alpha)).$$
Extending $f_i$'s to the entire surface $S$, we get $$\beta = (\varphi^{-1}f_1^{-1} f f_1 \varphi) \circ \cdots \circ (\varphi^{-1}f_n^{-1} f f_n \varphi)(\alpha).$$ It concludes that $\alpha$ and $\beta$ are connected by a path in $\mathcal{N}_f(S)$. Consequently, the normal closure of $f$ contains the commutator subgroup of $\Mod(S)$ by Theorem \ref{thm.LM}, and thus $f$ is the normal generator of $\Mod(S)$ due to the perfectness of $\Mod(S)$ \cite[Theorem 5.2]{FarbMargalit12}.

Now to see the second assertion, suppose that the normal closure of $f|_A$ contains $\PMod(A)$. When $A$ has genus at least two, the second assertion follows from the fact that the perfectness of $\PMod(A)$ no longer play a role in the above argument. However, when $A$ has genus one, we cannot find $\varphi \in \Mod(S)$ that $\varphi(\alpha)$ and $\varphi(\beta)$ are in $A$. Hence, we introduce a new curve which are connected with $\alpha$ and $\beta$ in $\N_f(S)$.

Again, noting that $S \setminus (\alpha \cup \beta)$ is of genus $g-2$ with four boundary components, we can find a non-separating curve $\delta$ in $S$ that $i(\alpha, \delta) = i(\beta, \delta) = 1$. Since a tubular neighborhood of $\alpha \cup \delta$ is a subsurface homeomorphic to one-holed torus, there exists $\varphi \in \Mod(S)$ that $\varphi(\alpha)$ and $\varphi(\delta)$ are contained in $A$. Hence, by the same argument above, $\alpha$ and $\delta$ are connected by a path in $\N_f(S)$. Similarly, $\delta$ and $\beta$ are so. It completes the proof.

\section{Mapping classes with small asymptotic translation lengths}

As in Theorem \ref{thm.LMtrans}, a pseudo-Anosov $f \in \Mod(S_g)$ is a normal generator if its asymptotic translation length on Teichm\"uller space is small. One natural question in this line of thought is whether the similar criterion can be provided without assumption that the mapping class is pseudo-Anosov. We come up with a collection of mapping classes which is strictly larger than the collection of pseudo-Anosovs.

\begin{definition}[partly pseudo-Anosov mapping class]
	Let $S$ be a closed connected orientable surface of genus $g \ge 2$. $f \in \Mod(S_g)$ is called \emph{partly pseudo-Anosov} if there exist a representative $f_0$ of $f$ and an embedded subsurface $A \subseteq S_g$ so that $f_0|_A$ is a pseudo-Anosov homeomorphism of $A$. This is same to say that $\tau_{\T}(f) > 0$. We simply denote by $f|_A$ the mapping class of $A$ represented by $f_0|_A$.
\end{definition}

\begin{remark}
	There is a similar notion, called \emph{partial pseudo-Anosov} mapping classes (see \cite{minsky2021bottlenecks}). $f \in \Mod(S)$ is called partial pseudo-Anosov when there is a subsurface $A \subsetneq S$ that $f|_A$ is pseudo-Anosov and $f|_{S \setminus A}$ is the identity. Hence, partial pseudo-Anosov mapping classes are partly pseudo-Anosov, but the converse does not hold. In particular, pseudo-Anosov mapping class are partly pseudo-Anosov while they are not partial pseudo-Anosov.
\end{remark}

Then we state a suitable generalization of the criterion of Lanier and Margalit.

\smalltrans

The key ingredient of the proof is Minsky's product region theorem for Teichm\"uller spaces. The Teichm\"uller space $\T(S)$ can be decomposed into a product of $\T(S \setminus \partial A)$ and $\prod_{|\partial A|} \bH ^2$. Minsky proved the following theorem comparing distances in $\T(S)$ and $\T(S \setminus \partial A) \times \prod_{|\partial A|} \bH^2$.

\begin{thm}[Product region theorem, {\cite{minsky1996extremal}}] \label{thm.minsky}
	Let $\gamma$ be a multicurve on $S$, and $X = \T(S\setminus \gamma) \times \prod_{|\gamma|} \bH^2$, equipped with the metric $$d_X = \max \{d_{\T(S\setminus \gamma)}, d_{\bH^2}, \ldots, d_{\bH^2}\}.$$ Then the Fenchel-Nielsen coordinates on $\T(S)$ give rise to a natural homeomorphism $\Pi : \T(S) \to X$. Futhermore, for $\epsilon > 0$ sufficiently small, there exists $c = c(S, \epsilon)$ such that $$|d_{\T}(\sigma, \tau) - d_X(\Pi(\sigma), \Pi(\tau))| \le c$$ for any $\sigma$, $\tau \in \T(S)$ at which $\gamma$ has length less than $\epsilon$.
\end{thm}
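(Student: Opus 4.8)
The plan is to deduce the distance comparison from Kerckhoff's extremal-length formula for the Teichm\"uller metric,
$$d_{\T}(\sigma,\tau) = \tfrac{1}{2}\log \sup_{\mu} \frac{\mathrm{Ext}_\tau(\mu)}{\mathrm{Ext}_\sigma(\mu)},$$
where $\mu$ ranges over all measured foliations (equivalently, over simple closed curves, by density and continuity of extremal length). The homeomorphism $\Pi$ is read off directly from Fenchel--Nielsen coordinates: the factor $\T(S\setminus\gamma)$ records the conformal shapes of the complementary pieces, while for each component $\gamma_i$ of $\gamma$ the length--twist pair $(\ell_i,\theta_i)$ is packaged, up to a uniformly bounded change of coordinates, into a point $u_i=\theta_i+\sqrt{-1}/\ell_i$ of the upper half plane $\bH^2$ carrying its standard hyperbolic metric. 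With $\Pi$ in hand, the entire content of the theorem is the coarse metric comparison, and throughout I write $\asymp$ for comparability up to a multiplicative constant depending only on $S$ and $\epsilon$.

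First I would fix, for $\epsilon$ below the Margulis constant, the standard embedded collars $A_i$ about the short geodesics $\gamma_i$; by the collar lemma these are disjoint, of modulus comparable to $1/\ell_i$, and of width tending to infinity as $\ell_i\to 0$, and it is precisely these diverging widths that make the factors coarsely independent. The core estimate to establish is Minsky's \emph{decoupling inequality}: there is a constant depending only on $S$ and $\epsilon$ such that for every $\mu$,
$$\mathrm{Ext}_\sigma(\mu) \asymp \max\Big\{\mathrm{Ext}_{\sigma_0}(\mu),\ \max_i \mathrm{Ext}^{A_i}_\sigma(\mu)\Big\},$$
where $\sigma_0$ is the projection of $\sigma$ to $\T(S\setminus\gamma)$ and $\mathrm{Ext}^{A_i}_\sigma$ is the extremal length computed inside the collar $A_i$. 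The annular term is governed by the intersection number $i(\mu,\gamma_i)$ together with the net twisting of $\mu$ across $A_i$, and one checks that it is comparable to a quantity determined by the hyperbolic position of the coordinate $u_i\in\bH^2$.

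Granting the decoupling estimate, I would verify the two matching facts that identify the factors. Restricting to a variation of a single coordinate $u_i$ (fixing $\sigma_0$ and the remaining $u_j$), the annular estimate shows that the logarithmic ratio in Kerckhoff's formula, optimized over curves crossing $\gamma_i$, reproduces $d_{\bH^2}$ between the two values of $u_i$ up to bounded error; similarly, varying only $\sigma_0$ reproduces $d_{\T(S\setminus\gamma)}$. Conversely, because $\mathrm{Ext}$ is coarsely a maximum of the per-factor contributions while Kerckhoff takes a supremum, the worst test foliation is essentially concentrated in a single factor, so the supremum of the ratio is comparable to the maximum over factors of the per-factor suprema. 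Applying $\tfrac12\log$ then turns the multiplicative constants into the additive constant $c=c(S,\epsilon)$ and converts the maximum of ratios into the max-metric $d_X$, yielding $|d_{\T}-d_X\circ\Pi|\le c$.

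The hard part will be the decoupling inequality itself, since an arbitrary curve $\mu$ may wind simultaneously through several collars and through the thick subsurface, and one must show its extremal length is nonetheless controlled, up to a uniform multiplicative constant, by the separate contributions. The upper bound I would obtain by constructing a comparison conformal metric on $\sigma$ as a sum of model metrics --- a flat cylinder metric on each $A_i$ adapted to the twisting, glued to a bounded-geometry metric on the complement --- and estimating $\mathrm{Ext}_\sigma(\mu)$ by the squared $\mu$-length divided by the area. The lower bound is the delicate direction: for each factor one must exhibit a test curve, or argue analytically via the Dirichlet-energy definition of extremal length and the large collar widths, to show that no factor's contribution can be concealed. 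Keeping the constants uniform as $\ell_i\to 0$, and handling the interaction where $\mu$ passes between a collar and the thick subsurface, is exactly where the collar lemma's diverging widths are indispensable and where the genuine work resides.
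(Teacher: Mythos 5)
First, the relevant point of comparison: the paper does not prove this statement at all --- it is Minsky's product region theorem, quoted verbatim from \cite{minsky1996extremal} and used as a black box in the proof of Theorem \ref{thm:maintheorem}. So the only meaningful question is whether your sketch would constitute a proof of Minsky's theorem itself. Its architecture does faithfully reproduce Minsky's actual argument: he works from Kerckhoff's extremal-length formula for $d_{\T}$, and the core of his paper is precisely the estimate you call the decoupling inequality, namely that for $\sigma$ in the $\epsilon$-thin part of $\gamma$ one has $\mathrm{Ext}_\sigma(\mu)$ comparable to the maximum of $\mathrm{Ext}_{\sigma_0}(\mu)$ and the annular contributions, with the annular term governed by $i(\mu,\gamma_i)$ and the twisting, which is what identifies each collar factor with $\bH^2$ up to bounded distortion. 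Your reduction from that estimate to the distance formula is also essentially right, and the soft direction is exactly as you say: the pointwise inequality $\max_k a_k/\max_k b_k \le \max_k (a_k/b_k)$ gives $d_{\T} \le d_X + c$, while the lower bound $d_{\T} \ge d_X - c$ requires per-factor test curves --- curves disjoint from $\gamma$ for the $\T(S\setminus\gamma)$ factor (for which the annular terms are negligible) and curves crossing a single $\gamma_i$ with prescribed twisting for each $\bH^2$ factor.

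As a proof, however, the proposal has a genuine gap, and you name it yourself: everything is conditional on the decoupling inequality, which you assert and defer. That estimate is not a routine lemma one can outsource to a comparison metric in a few lines; it is the entire analytic content of the theorem and occupies the bulk of Minsky's paper. In particular the lower bound --- showing that no factor's contribution to $\mathrm{Ext}_\sigma(\mu)$ can be concealed when $\mu$ winds through several collars and the thick part simultaneously, with constants uniform as $\ell_i \to 0$ --- is where Minsky's real work lies (his argument runs through careful estimates on extremal metrics and moduli of families of arcs crossing the collars, not merely the collar lemma plus a glued model metric). A secondary unverified step is the claim that the annular extremal-length contribution is comparable to a function of the hyperbolic position of $u_i = \theta_i + \sqrt{-1}/\ell_i$; this requires fixing a twist normalization and proving a quantitative comparison, which you state but do not establish. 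So the proposal should be graded as a correct and well-organized roadmap through Minsky's published proof, rather than a proof: it identifies the right decomposition and correctly isolates the hard step, but the hard step is exactly what is missing.
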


\begin{proof}[Proof of Theorem \ref{thm:maintheorem}]

	Now fix a point $\sigma \in \T(S)$ that $\partial A$ has length less than $\epsilon$.  Since $A$ is invariant under $f$, $\partial A$ also has its length less than $\epsilon$ at $f \sigma$. Then from Theorem \ref{thm.minsky}, we have $$d_{\T}(\sigma, f^n \sigma) \ge d_{\T(A\setminus \partial A)}(\pi(\sigma), f|_A^n \pi(\sigma)) - c$$ where $\pi(\sigma)$ is the $\T(A \setminus \partial A)$ component of $\Pi(\sigma)$. Consequently, $\ell_{\T}(f) \le \frac{1}{2} \log 2$ implies $\ell_{\T(A \setminus \partial A)}(f|_A) \le \frac{1}{2} \log 2$. Then by Theorem \ref{thm.LMtrans}, the commutator subgroup of $\PMod(A)$ is contained in the normal closure of $f|_A$, and therefore applying Theorem \ref{thm.main1} completes the proof.
\end{proof}

Further observation on reducible mapping classes gives an analogous criterion for them as in \cite{lanier2022normal}. Let $f \in \Mod(S)$ be reducible with maximal invariant multicurve $\gamma$ and $\ell_{\T}(f) > 0$. Then some power of $f$ leaves each component of $S \setminus \gamma$ and $\gamma$ invariant. If the power is periodic on each component of $S \setminus \gamma$, then we can take more power so that it becomes a multitwist. As in \cite{thurston1988geometry}, the action of a multitwist on the Teichm\"uller space can be restricted to an isometrically embedded hyperbolic plane in the Teichm\"uller space. Since the restricted action is parabolic, it contradicts to $\ell_{\T}(f) > 0$. Hence, the power should be partly pseudo-Anosov.

Now let $A$ be a component of $S \setminus \gamma$. Then $A$ is isotopic to at least one of $f(A), \ldots, f^N(A)$ by pigeonhole principle where $N$ is the number of components in $S \setminus \gamma$. Together with this observation, Theorem \ref{thm:maintheorem} implies the following.

\transred

\section{Genericity of normal generation in a fibered cone} \label{sec:generic}

In this section, we consider 3-manifolds fibering over the circle. Explicitly, let $f \in \Mod(S)$ be a mapping class, and consider its mapping torus $M_f$. Denote by $[f] \in H^1(M_f;\R)$ the image of the generator of $H^1(S^1;\Z)$ via the corresponding fibration $M_f \to S^1$. Then there exists a closed cone $\C_f$ in $H^1(M_f;\R)$ containing $[f]$ so that every primitive integral class in its interior corresponds to each fibration over the circle \cite{thurston1986norm}. This closed cone $\C_f$ is called \emph{fibered cone}.
Thurston also defined a norm $\| \cdot \|$ on $H^1(M_f;\R)$, so-called Thurston norm, to study fibrations of $M_f$ by investigating $H^1(M_f;\R)$.

When $f$ is pseudo-Anosov, then the mapping torus $M_f$ is hyperbolic \cite{thurston1998hyperbolic} and the work of McMullen \cite[Corollary 5.4]{McMullen00} on Teichm\"uller polynomial provides the necessary decay of $\ell_{\T}(\varphi)$ for primitive integral $[\varphi] \in \inte \C_f$ as $\lVert [\varphi] \rVert \to \infty$. Based on this observation, Baik, Kin, Shin, and Wu proved the following normal generation phenomenon in the fibered cone:

\begin{thm}[{\cite[Theorem 3.4]{baik2023asymptotic}}]
Let $f \in \Mod(S)$ be pseudo-Anosov and consider its mapping torus $M_f$ with fibered cone $\C_f$.	Let $L$ be a 2-dimensional rational subspace of $H^1(M;\R)$. Then for all but finitely many primitive integral classes $[\varphi] \in \C_f$, $\varphi$ normally generates $\Mod(S_{\varphi})$ where $S_{\varphi}$ is the fiber surface of the fibration corresponding to $[\varphi]$.
\end{thm}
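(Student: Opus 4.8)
The plan is to assemble three ingredients already available in this section: the hyperbolicity of $M_f$ together with Thurston's theory of fibered faces, McMullen's decay estimate for the translation length along the fibered cone, and the Lanier--Margalit criterion of Theorem \ref{thm.LMtrans}. First I would observe that, since $f$ is pseudo-Anosov, the mapping torus $M_f$ is hyperbolic \cite{thurston1998hyperbolic}, so every fibration of $M_f$ has pseudo-Anosov monodromy; in particular each primitive integral class $[\varphi]$ in the interior of $\C_f$ gives a pseudo-Anosov $\varphi$ to which Theorem \ref{thm.LMtrans} applies. Since such $[\varphi]$ is primitive, the fiber $S_\varphi$ is connected and $|\chi(S_\varphi)| = \lVert [\varphi]\rVert$, so the genus of $S_\varphi$ tends to infinity with $\lVert[\varphi]\rVert$. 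Hence all but finitely many of the classes in question have fiber genus at least three, which is exactly the hypothesis needed to promote the conclusion ``contains the commutator subgroup'' to ``normally generates'' in Theorem \ref{thm.LMtrans}.

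The core of the proof is then the estimate $\ell_{\T}(\varphi)\le\tfrac12\log 2$ for all but finitely many primitive integral $[\varphi]\in\C_f\cap L$. For this I would invoke McMullen's analysis of the Teichm\"uller polynomial \cite[Corollary 5.4]{McMullen00}: up to a fixed positive multiple, $\ell_{\T}(\varphi)$ equals $\log\lambda(\varphi)$, and $\ell_{\T}^{-1}$ extends to a continuous, concave function on $\C_f$ that is homogeneous of degree one and positive on $\inte\C_f$. Consequently $\ell_{\T}(\varphi)$ decays like $\lVert[\varphi]\rVert^{-1}$ as $[\varphi]$ leaves any compact subset of the projectivized open cone. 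On such a compact set only finitely many integral classes fail the bound $\ell_{\T}\le\tfrac12\log 2$, and combining this with the genus statement above and Theorem \ref{thm.LMtrans} would show that those classes normally generate $\Mod(S_\varphi)$.

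The main obstacle, and the precise place where two-dimensionality of $L$ is used, is the classes whose projective direction approaches $\partial\C_f$: there the inverse normalized dilatation degenerates and the decay above is no longer uniform, so a priori infinitely many integral classes of large norm could sit in a fixed neighborhood of a boundary ray with translation length bounded away from $0$. The resolution I would pursue exploits a feature special to rank two. The rays $L\cap\partial\C_f$ are rational, because $L$ and the faces of the Thurston norm ball are rational, and in the rank-two lattice $L\cap H^1(M_f;\Z)$ any primitive integral class off such a ray spans with the primitive direction of the ray a parallelogram of nonzero integral area; hence its transverse distance to the ray is bounded below by a positive constant. Matching this uniform lower bound on the transverse spacing against the rate at which $\ell_{\T}^{-1}$ vanishes at the boundary, one finds that along each of the finitely many lattice layers parallel to a boundary ray the translation length again tends to $0$ as the norm grows, leaving only finitely many classes in $\C_f\cap L$ with $\ell_{\T}>\tfrac12\log 2$. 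I expect this comparison of the boundary degeneration rate with the lattice spacing to be the delicate step, and it is the reason the statement is restricted to a two-dimensional slice rather than the whole of $H^1(M_f;\R)$.
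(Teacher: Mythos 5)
First, a point of comparison: the paper does not prove this statement at all --- it is quoted verbatim as \cite[Theorem 3.4]{baik2023asymptotic}, with only a one-line indication (hyperbolicity of $M_f$ plus the decay of $\ell_{\T}$ coming from \cite[Corollary 5.4]{McMullen00}) of why such a result is plausible. So your proposal has to be judged on its own terms. The first half of it is indeed the standard route and matches that indication: hyperbolicity of $M_f$ makes every monodromy pseudo-Anosov; the Fried--McMullen function $1/\ell_{\T} = 1/\log\lambda$ extends to a concave, degree-one homogeneous function on the cone, positive on the interior and vanishing on $\partial\C_f$; hence $\ell_{\T}(\varphi) = O(1/\lVert\varphi\rVert)$ uniformly over classes whose projective direction stays in a compact subset of the open cone, and Theorem \ref{thm.LMtrans} finishes for those classes (since $S$ is closed here, the fibers are closed and their genus grows with the norm, so the genus-three proviso eventually holds).

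The gap is in your treatment of the classes near the boundary rays, and it is fatal. Your claim that ``along each lattice layer parallel to a boundary ray the translation length again tends to $0$'' is false in general, and it also does not follow from the ingredients you invoke. Let $v$ be a primitive integral class on a boundary ray of $\C_f\cap L$ and $u$ a primitive integral class with $u+v$ in the open cone, and set $F = 1/\ell_{\T}$. Concavity plus degree-one homogeneity give superadditivity, so $F(u+nv)\ge F\bigl((n-1)v\bigr)+F(u+v)=F(u+v)$; equivalently $\ell_{\T}(u+nv)\le \ell_{\T}(u+v)$, a \emph{constant} upper bound, not decay. Your lattice-spacing comparison cannot improve this: the transverse distance from $u+nv$ to the ray $\R_{+}v$ is constant in $n$, and concavity bounds $F$ below only by (that constant) times a linear vanishing rate, again a constant. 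Indeed $F$ restricted to such a layer is concave, positive, hence nondecreasing, but it may converge to a finite limit --- and this is exactly what happens in the well-known examples (McMullen's simplest hyperbolic braid exterior, Hironaka's and Kin--Takasawa's families in the magic manifold): along boundary-parallel integral families the dilatation converges to the house of the Teichm\"uller polynomial specialized at the boundary class, a limit such as $(3+\sqrt{5})/2$ that lies well above $\sqrt{2}$. So infinitely many primitive classes in $\C_f\cap L$ can violate the bound $\ell_{\T}\le\frac{1}{2}\log 2$, and \emph{no} argument running solely through the translation-length criterion of Theorem \ref{thm.LMtrans} can prove the quoted theorem; handling these boundary-parallel families by a genuinely different mechanism is precisely the content of the two-dimensional case in \cite{baik2023asymptotic}.

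What your argument does prove is the weaker, translated-cone statement: by superadditivity, choosing $x$ deep enough in the cone that $\ell_{\T}(x)\le\frac{1}{2}\log 2$ forces $\ell_{\T}(x+c)\le\ell_{\T}(x)$ for every $c\in\C_f$, so every primitive integral class in $x+\C_f$ normally generates. That is the genericity statement this paper actually establishes in Section \ref{sec:generic} (there in the more general partly pseudo-Anosov setting), and it is strictly weaker than ``all but finitely many classes in $\C_f\cap L$,'' since the translated cone misses entire boundary-parallel layers.
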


Our purpose in this section is to generalize this result to non-hyperbolic fibered 3-manifolds. To do this, let $f \in \Mod(S)$ be a partly pseudo-Anosov mapping class and let $A$ be its invariant subsurface on which $f|_A$ is pseudo-Anosov. Then the inclusion $A \hookrightarrow S$ induces the inclusion $i : M_{f|_A} \hookrightarrow M_f$ of the hyperbolic fibered 3-manifold $M_{f|_A}$. Now we have the homomorphism $i^* : H^1(M_f;\R) \to H^1(M_{f|_A}; \R)$. Then we have the following.

\begin{thm}
	Let $f \in \Mod(S)$ be partly pseudo-Anosov and consider the mapping torus $M_f$ with fibered cone $\C_f$. Then there exists $x \in \C_f$ such that every primitive integral class $[\varphi] \in x + \C_f$ normally generates $\Mod(S_{\varphi})$.
\end{thm}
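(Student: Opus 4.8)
The plan is to transport the locality principle (Theorem~\ref{thm.main1}) through the restriction map $i^*$: I would use the dynamics of suspension flows to pin down where $i^*$ sends fibered classes, then McMullen's convexity results to make the translation length of the restricted monodromy small, so that the hypotheses of Theorem~\ref{thm.main1} (fed by Theorem~\ref{thm.LMtrans}) are met for every $\varphi$ with $[\varphi]\in x+\C_f$.

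First I would set up the correspondence between fibrations of $M_f$ and of $M_{f|_A}$. The $f$-suspension flow on $M_f$ preserves the flow-invariant submanifold $M_{f|_A}$ and restricts there to the suspension flow of the pseudo-Anosov $f|_A$; in particular every closed orbit of the $f|_A$-flow is a closed orbit of the $f$-flow. By Fried's dynamical characterization of fibered cones, $\inte\C_f$ is exactly the set of classes strictly positive on the homology directions of the $f$-flow (its closed orbits and their limits), and similarly for $\inte\C_{f|_A}$. Since the $f|_A$-directions sit among the $f$-directions, any $[\varphi]\in\inte\C_f$ restricts to a class $i^*[\varphi]$ positive on every homology direction of the $f|_A$-flow, so $i^*(\inte\C_f)\subseteq\inte\C_{f|_A}$ and $i^*(\C_f)\subseteq\overline{\C_{f|_A}}$. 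For a primitive integral $[\varphi]\in\inte\C_f$ with fiber $S_\varphi$ and monodromy $\varphi$, the class $i^*[\varphi]$ is then a fibered class of the hyperbolic manifold $M_{f|_A}$; I would realize its fiber as $A_\varphi=S_\varphi\cap M_{f|_A}$, and since $M_{f|_A}$ is flow-invariant the first-return map $\varphi$ preserves $A_\varphi$ and restricts there to the (pseudo-Anosov) monodromy $\varphi|_{A_\varphi}$. Thus every such $\varphi$ is partly pseudo-Anosov with invariant subsurface $A_\varphi$.

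Next I would arrange the translation-length hypothesis. By McMullen \cite[Corollary 5.4]{McMullen00}, $\ell_{\T}$ extends over $\inte\C_{f|_A}$ to a continuous function whose reciprocal $1/\ell_{\T}$ is homogeneous of degree $1$ and concave; a concave degree-$1$ function is superadditive, so for any $p\in\inte\C_{f|_A}$ and $c\in\overline{\C_{f|_A}}$ one has $1/\ell_{\T}(p+c)\ge 1/\ell_{\T}(p)$. Taking $x=tx_0$ for a fixed $x_0\in\inte\C_f$ (say $x_0=[f]$) and $p=i^*x=t\,i^*x_0$, homogeneity gives $1/\ell_{\T}(i^*x)=t/\ell_{\T}(i^*x_0)$, which exceeds $2/\log 2$ once $t$ is large. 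Since every $[\varphi]\in x+\C_f$ has $i^*[\varphi]\in i^*x+\overline{\C_{f|_A}}$, superadditivity yields $\ell_{\T}(\varphi|_{A_\varphi})=\ell_{\T}(i^*[\varphi])\le\tfrac12\log 2$ uniformly, and Theorem~\ref{thm.LMtrans} places the commutator subgroup of $\PMod(A_\varphi)$ inside the normal closure of $\varphi|_{A_\varphi}$. The remaining hypothesis, and I expect the most delicate point, is that $A_\varphi$ have genus at least three. On a single fibered face the Thurston norm is linear \cite{thurston1986norm}, so the complexity $|\chi(A_\varphi)|=\lVert i^*[\varphi]\rVert\ge\lVert i^*x\rVert=t\lVert i^*x_0\rVert\to\infty$. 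The genus of $A_\varphi$ equals $1+\tfrac12(|\chi(A_\varphi)|-b)$, where $b$ is its number of boundary curves, one contribution per boundary torus $T_j$ of $M_{f|_A}$ equal to the divisibility of $[\varphi]|_{T_j}$. The key observation is that this divisibility is uniformly bounded over all primitive $[\varphi]$: writing the restriction $H^1(M_f;\Z)\to H^1(T_j;\Z)\cong\Z^2$ in Smith normal form, the image of any primitive class has divisibility at most the largest invariant factor, a constant independent of $[\varphi]$. Hence $b\le B$ for a constant $B$, and the genus of $A_\varphi$ is at least $1+\tfrac12(t\lVert i^*x_0\rVert-B)\to\infty$, so it exceeds three for $t$ large. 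The same invariant-factor bound applied to $i^*$ caps the number of components of $A_\varphi$, and I would check (or arrange) that $i^*[\varphi]$ is primitive so that $A_\varphi$ is connected and $\varphi|_{A_\varphi}$ is genuinely pseudo-Anosov.

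Finally, fixing $t$ large enough for both conclusions, every primitive integral $[\varphi]\in x+\C_f$ is partly pseudo-Anosov with a connected invariant subsurface $A_\varphi$ of genus at least three whose normal closure contains the commutator subgroup of $\PMod(A_\varphi)$; applying Theorem~\ref{thm.main1} together with the perfectness of $\Mod(S_\varphi)$ shows that $\varphi$ normally generates $\Mod(S_\varphi)$. I expect the two structural inputs, namely Fried's identification $i^*(\inte\C_f)\subseteq\inte\C_{f|_A}$ and the uniform control of the boundary curves and components of $A_\varphi$ via invariant factors, to be where the real work lies, while the translation-length estimate is essentially formal once McMullen's concavity is in hand.
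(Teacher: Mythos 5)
Your overall route is the same as the paper's: push the problem into the fibered cone of the hyperbolic piece via $i^*$, use McMullen's concavity of $1/\ell_{\T}$ to force $\ell_{\T}(i^*[\varphi])\le\frac{1}{2}\log 2$ once the translate $x+\C_f$ sits deep enough in $\C_{f|_A}$, and then feed Theorem \ref{thm.LMtrans} into Theorem \ref{thm.main1}. Your justification of the cone containment through Fried's homology directions, and the superadditivity argument (concave $+$ homogeneous of degree one $\Rightarrow$ superadditive), are correct and in fact more careful than the corresponding one-line assertions in the paper's proof.

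The gap is exactly in the step you flag as delicate: controlling the topology of $A_\varphi$. Your key algebraic claim --- that the image of a primitive class under the restriction $H^1(M_f;\Z)\to H^1(T_j;\Z)$ has divisibility bounded by the largest invariant factor --- is false. Consider $\Z^3\to\Z^2$, $(a,b,c)\mapsto(a,b)$, whose invariant factors are both $1$: the primitive vector $(k,k,1)$ maps to $(k,k)$, of divisibility $k$. Geometrically, as soon as the restriction map has nontrivial kernel (which happens whenever $b_1(M_f)\ge 3$), one can take an integral $\alpha\in\inte\C_f$ and a nonzero $\beta$ in the kernel; the classes $k\alpha+\beta$, adjusted to be primitive, lie in $x+\C_f$ for all large $k$ yet restrict to $T_j$ with divisibility growing linearly in $k$. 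Consequently neither the number of boundary circles of $A_\varphi$ nor the primitivity of $i^*[\varphi]$ (hence connectedness of $A_\varphi$, which you hoped to ``arrange'') is uniformly controlled on $x+\C_f$, and your genus lower bound collapses: along such directions the Thurston norm and the boundary count can grow at the same rate, so the genus of $A_\varphi$ can stay bounded --- this really happens, as there are hyperbolic fibered $3$-manifolds with boundary whose fibered cones contain infinitely many fibers of genus $0$ or $1$. To be fair, the paper's own proof is silent on this point: it is automatic in the closed-manifold setting of \cite{baik2023asymptotic}, where fibers are closed and genus grows with the Thurston norm, but not here, where $M_{f|_A}$ has boundary. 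So you have correctly isolated where the real work lies, but the invariant-factor argument does not close it; one would need a genuinely different idea, e.g.\ a proof that the subcone $i^*(\C_f)$ avoids low-genus directions, or a restriction to classes whose boundary divisibilities are bounded.
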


\begin{proof}
	One can observe that $i^* : H^1(M_{f|_A};\R) \to H^1(M_f;\R)$ maps the fibered cone $\C_f$ into the fibered cone $\C_{f|_A}$ with $i*([f]) = [f|_A]$. In particular, it is a non-trivial map on $\C_f$. Hence, for $x \in \C_f$ with $i^*(x) \neq 0$, we can make the distance between $i^*(x + \C_f)$ and $\partial \C_{f|_A}$ large enough by taking large $x$. Then as in \cite{baik2023asymptotic}, we can take $x$ so large that decreasing behavior of $\ell_{\T}$ in the fibered cone implies that $\ell_{\T}(\varphi) \le \frac{1}{2}\log 2$ for all $[\varphi] \in i^*(x + \C_f)$. Now Theorem \ref{thm.LMtrans} and Theorem \ref{thm.main1} completes the proof.
\end{proof}

\begin{remark}
	Since $M_f$ is not hyperbolic when $f$ is not pseudo-Anosov, it is non-trivial whether we have such decaying phenomenon of $\ell_{\T}$ on $\C_f$ in general. This is the reason why the proof in \cite{baik2023asymptotic} may not be directly applied to this case.
\end{remark}

Now let $L$ be a rational subspace of $H^1(M_f; \R)$ such that $i^*(L)$ is of dimension at most 2. Applying the argument in \cite{baik2023asymptotic}, we deduce the following.

\begin{thm}
	Let $f \in \Mod(S)$ be partly pseudo-Anosov and consider the mapping torus $M_f$ with fibered cone $\C_f$. Let $L$ be a rational subspace of $ H^1(M_f;\R)$ with $\dim i^*(L) \le 2$. For all but finitely many primitive integral classes $(S_{\varphi}, \varphi)$ in $\C_f \cap L$, $\varphi$ normally generates $\Mod(S_{\varphi})$.
\end{thm}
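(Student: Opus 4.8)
The plan is to transport the problem from the non-hyperbolic manifold $M_f$ to the hyperbolic piece $M_{f|_A}$ through the map $i^*\colon H^1(M_f;\R)\to H^1(M_{f|_A};\R)$, precisely because the decay of $\ell_{\T}$ furnished by McMullen's Teichm\"uller polynomial \cite{McMullen00} is available only on the hyperbolic $M_{f|_A}$ and not on $M_f$ itself. For a primitive integral class $[\varphi]\in\inte\C_f\cap L$, restricting the corresponding fibration of $M_f$ to the embedded submanifold $M_{f|_A}$ produces the fibration of $M_{f|_A}$ with class $i^*[\varphi]\in\inte\C_{f|_A}$; its fiber is precisely the invariant subsurface $A_\varphi\subseteq S_\varphi$ and its monodromy is the pseudo-Anosov $\varphi|_{A_\varphi}$. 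In particular, the genus of $A_\varphi$ and the dilatation $\log\lambda(\varphi|_{A_\varphi})=\ell_{\T}(\varphi|_{A_\varphi})$ depend on $[\varphi]$ only through $i^*[\varphi]$. This is the structural observation that lets a two-dimensional statement downstairs control the (possibly higher-dimensional) family upstairs.

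Next I would apply the finiteness argument of Baik--Kin--Shin--Wu to the at most two-dimensional rational cone $i^*(L)\cap\C_{f|_A}$ inside the hyperbolic $M_{f|_A}$. As in \cite[Theorem 3.4]{baik2023asymptotic}, the normalized dilatation is a homogeneous function that blows up only sublinearly toward $\partial\C_{f|_A}$, so all but finitely many integral classes $\mu\in i^*(L)\cap\C_{f|_A}$ satisfy $\log\lambda(\mu)\le\frac{1}{2}\log 2$; moreover, since the fiber genus grows with the Thurston norm \cite{thurston1986norm}, only finitely many such $\mu$ have fiber genus less than three. For every remaining (``good'') value $\mu=i^*[\varphi]$, the restriction $\varphi|_{A_\varphi}$ is pseudo-Anosov on a subsurface of genus at least three with $\ell_{\T}(\varphi|_{A_\varphi})\le\frac{1}{2}\log 2$, so Theorem \ref{thm.LMtrans} places the commutator subgroup of $\PMod(A_\varphi)$ in the normal closure of $\varphi|_{A_\varphi}$, and then Theorem \ref{thm.main1} shows that $\varphi$ normally generates $\Mod(S_\varphi)$.

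The main obstacle is to pass from finiteness downstairs to finiteness upstairs, since $i^*$ need not be injective on $L$: a single bad value $\mu$ has a whole affine fiber $\bigl(i^*|_L\bigr)^{-1}(\mu)$ of candidate classes. Here I would use that the fibered cone $\C_f$ is pointed and that $i^*$ is positive on its interior (it maps $\inte\C_f$ into $\inte\C_{f|_A}$ with $i^*[f]=[f|_A]$). Consequently no nonzero element of $\ker i^*\cap L$ is a recession direction of $\C_f$, so the intersection of the affine subspace $\bigl(i^*|_L\bigr)^{-1}(\mu)$ with the pointed cone $\C_f$ is bounded and hence contains only finitely many primitive integral classes. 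Taking the union over the finitely many bad values $\mu$ then shows that only finitely many primitive integral $[\varphi]\in\C_f\cap L$ fail to normally generate $\Mod(S_\varphi)$. The delicate points I expect to spend the most care on are verifying that $\ker i^*\cap L$ meets $\C_f$ only at the origin (which requires controlling the behavior of $i^*$ along $\partial\C_f$) and confirming that the Baik--Kin--Shin--Wu count, stated for genuine two-dimensional slices, applies verbatim to the image cone $i^*(L)\cap\C_{f|_A}$ even when it degenerates to dimension one.
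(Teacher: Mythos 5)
Your overall route is the same as the paper's: push everything down to the hyperbolic piece via $i^*$, run the Baik--Kin--Shin--Wu finiteness argument on the at most two-dimensional slice $i^*(L)\cap\C_{f|_A}$, and convert small dilatation of the restricted monodromy into normal generation via Theorem \ref{thm.LMtrans} and Theorem \ref{thm.main1}. (The paper itself offers essentially nothing beyond this one line, so on that score you match it.) To your credit, you isolate the real difficulty that the paper passes over in silence: finiteness downstairs does not obviously give finiteness upstairs, because $i^*$ may have a kernel on $L$, so a single bad class $\mu$ can have an entire affine family of preimages in $\C_f\cap L$.

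However, your resolution of that difficulty does not work. You claim $\ker i^*\cap L$ meets $\C_f$ only at the origin because $i^*$ carries $\inte\C_f$ into $\inte\C_{f|_A}$; but this controls only interior points, while the recession cone of the closed cone $\C_f$ is all of $\C_f$ \emph{including} $\partial\C_f$, and boundary classes may perfectly well lie in $\ker i^*$ (note $0\in\C_{f|_A}$, so nothing prevents this). This is not a hypothetical worry. Take $f$ to be a partial pseudo-Anosov supported on $A$, so that $M_f$ contains the product piece $B\times S^1$ with $B=S\setminus A$, let $c\subset B$ be a non-separating curve, and let $\eta$ be the Poincar\'e dual of the vertical torus $T=c\times S^1$. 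Then $\eta$ has Thurston norm zero and $i^*\eta=0$, while the classes $[f]+k\eta$ are the images of $[f]$ under powers of the torus twist along $T$ (a self-homeomorphism of $M_f$), hence are all fibered and primitive, and infinitely many of them lie in a single fibered cone whose monodromies are all conjugate to $f$. Taking $L=\mathrm{span}([f],\eta)$, one has $\dim i^*(L)=1\le 2$, yet $\eta\in\ker i^*\cap L\cap\C_f$ and the fiber of $i^*|_L$ over the single class $[f|_A]$ contains infinitely many primitive integral classes of $\C_f\cap L$. So your boundedness argument fails exactly where it is needed; moreover, since all these classes share the monodromy $f$, no finiteness statement can handle them unless one already knows that $f$ itself normally generates, which is not among the hypotheses (and fails, e.g., for high powers of partial pseudo-Anosovs by Clay--Mangahas--Margalit). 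The gap you found is thus genuine, and it afflicts the paper's own one-line proof as well; but the lemma you propose to close it is false as stated, so your argument is only complete under the additional hypothesis $\ker i^*\cap L\cap\C_f=\{0\}$ (for instance, when $i^*|_L$ is injective and $L\cap\partial\C_f$ avoids the kernel).
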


One direct consequence of this theorem is a partial answer to \cite[Conjecture 1.3]{baik2023asymptotic}.

\begin{cor}
	Let $f \in \Mod(S)$ be partly pseudo-Anosov with invariant subsurface $A$ that $f|_A$ is pseudo-Anosov. If the dimension of maximal invariant subspace of $H_1(A; \R)$ under the action of $f|_A$ is at most one, then all but finitely many primitive integral classes $(S_{\varphi}, \varphi)$ in the fibered cone $\C_f$ normally generates $\Mod(S_{\varphi})$.
\end{cor}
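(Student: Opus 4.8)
The plan is to obtain this corollary as an immediate consequence of the preceding theorem by taking the rational subspace $L$ there to be all of $H^1(M_f;\R)$, so that $\C_f \cap L = \C_f$. For this to be legitimate I must check that the homological hypothesis forces the bound $\dim i^*(H^1(M_f;\R)) \le 2$ needed to invoke that theorem. So the entire task reduces to the elementary homological computation that controls $\dim i^*(H^1(M_f;\R))$.

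First I would note that, since $i^*$ maps into $H^1(M_{f|_A};\R)$, its image satisfies $\dim i^*(H^1(M_f;\R)) \le \dim H^1(M_{f|_A};\R)$, so it suffices to bound the right-hand side. To compute it, I would apply the Wang exact sequence to the fiber bundle $A \to M_{f|_A} \to S^1$. As $A$ is connected (being the support of a pseudo-Anosov), the sequence degenerates into the short exact sequence $0 \to \R \to H^1(M_{f|_A};\R) \to \ker\big((f|_A)^* - \mathrm{id}\big) \to 0$, where the kernel is taken on $H^1(A;\R)$. Hence $\dim H^1(M_{f|_A};\R) = 1 + \dim\ker\big((f|_A)^* - \mathrm{id}\big)$, i.e. one plus the dimension of the $(f|_A)^*$-invariant subspace of $H^1(A;\R)$.

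Next I would translate this invariant cohomology into invariant homology. Over the field $\R$, the map $(f|_A)^*$ on $H^1(A;\R)$ is the transpose of $(f|_A)_*$ on $H_1(A;\R)$, and a linear endomorphism and its transpose have fixed subspaces of equal dimension. Therefore the above kernel has the same dimension as the maximal $(f|_A)_*$-invariant subspace of $H_1(A;\R)$, which is at most one by hypothesis. Combining the estimates gives $\dim i^*(H^1(M_f;\R)) \le \dim H^1(M_{f|_A};\R) \le 1 + 1 = 2$. Taking $L = H^1(M_f;\R)$, which is a rational subspace with $\C_f \cap L = \C_f$ and $\dim i^*(L) \le 2$, the preceding theorem then yields that all but finitely many primitive integral classes $(S_\varphi, \varphi)$ in $\C_f$ give normal generators.

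I do not expect a serious obstacle here, since the analytic and group-theoretic content is entirely carried by the preceding theorem; the work is the homological bookkeeping that converts the hypothesis on $H_1(A;\R)$ into the dimension bound on $i^*(L)$. The only point needing mild care is the validity of the Wang computation for the fiber surface $A$, which has boundary; but the Wang sequence holds for an arbitrary fiber bundle over $S^1$ and uses only connectedness of the fiber, so the formula $\dim H^1 = 1 + \dim(\text{invariant } H^1 \text{ of the fiber})$ is unaffected.
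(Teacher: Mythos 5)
Your proposal is correct and follows exactly the route the paper intends: the paper states this corollary as a direct consequence of the preceding theorem, with $L = H^1(M_f;\R)$ and the observation that $\dim i^*(L) \le \dim H^1(M_{f|_A};\R) = 1 + \dim\ker\bigl((f|_A)^* - \mathrm{id}\bigr) \le 2$ left implicit. Your Wang-sequence computation and the transpose argument identifying invariant cohomology with invariant homology are precisely the bookkeeping needed to justify that step, so there is nothing to correct.
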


%%%%%%%%%%%%%%%%%%%%%%%%%%%%%%%%%%%%%%%%%%%%%%%%%%%%%%%%%%%%%%%%%%%%%%%%%%%%%%%
%
%	References
%
%%%%%%%%%%%%%%%%%%%%%%%%%%%%%%%%%%%%%%%%%%%%%%%%%%%%%%%%%%%%%%%%%%%%%%%%%%%%%%%

\medskip
\bibliographystyle{alpha} 
\bibliography{reducible}

\end{document}